\documentclass[11pt,a4paper]{article}
\usepackage[T1]{fontenc}
\usepackage[utf8]{inputenc}
\usepackage{amsmath,amssymb,amsthm}
\usepackage[margin=2.5cm]{geometry}
\usepackage{booktabs}

\newtheorem{theorem}{Theorem}
\newtheorem{proposition}[theorem]{Proposition}
\newtheorem{remark}{Remark}

\newcommand{\lsss}{\lambda_{\mathrm{SSS}}}

\title{A new lower bound for the Schur--Siegel--Smyth trace problem}
\author{David Niedbala Giraudin\\
\small Independent researcher, France\\
\small ORCID 0009-0009-1526-1178}
\date{22 September 2026}

\begin{document}
\maketitle

\begin{abstract}
We prove $\lsss \ge 1.80220$ for the smallest limiting trace-to-degree ratio of
totally positive algebraic integers, improving the bound $1.80203$ obtained by
Orloski, Talebizadeh Sardari and Smith. The proof exhibits an explicit
probability measure on $[0,8]$ together with eighteen integer polynomials, and
verifies their dual inequality on the whole of $[0,\infty)$ by interval
bisection. The certificate assumes nothing about the data it is built from: an
error in that data can only weaken the bound, never invalidate it.
\end{abstract}

\section{Introduction}

For an algebraic integer $\alpha$ of degree $n$ with conjugates
$\alpha_1,\dots,\alpha_n$, write $\operatorname{tr}(\alpha)=\sum_i\alpha_i$, and
call $\alpha$ totally positive if all its conjugates are positive reals. Let
$\lsss$ be the least real number such that, for every $\varepsilon>0$, only
finitely many totally positive algebraic integers satisfy
$\operatorname{tr}(\alpha)<(\lsss-\varepsilon)\deg(\alpha)$; equivalently,
$\lsss=\liminf \operatorname{tr}(\alpha)/\deg(\alpha)$.

Schur proved $\lsss\ge\sqrt{e}=1.6487\ldots$, Siegel $\lsss\ge 1.7336\ldots$,
and Smyth introduced in 1984 the auxiliary-polynomial method that produced every
improvement for the next forty years, up to $1.7931$ by Wang, Wu and
Wu~\cite{WWW}. Orloski, Talebizadeh Sardari and Smith~\cite{OTS} then added a
new constraint --- the non-negativity of the logarithmic energy of the limiting
measure, which follows from the work of Smith~\cite{Smi} --- and obtained
\begin{equation}\label{eq:ots}
  \lsss \ge 1.80203,
\end{equation}
the largest single improvement since 1984. On the other side,
$\lsss\le 1.8216$ by Orloski and Talebizadeh Sardari~\cite{OT}.

\begin{theorem}\label{thm:main}
$\lsss \ge 1.80220$.
\end{theorem}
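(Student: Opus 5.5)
The approach is to prove Theorem~\ref{thm:main} by a Smyth-type auxiliary-polynomial argument, augmented with the energy constraint of Orloski, Talebizadeh Sardari and Smith~\cite{OTS}. The argument is a weak duality: one exhibits a dual certificate and checks one inequality on $[0,\infty)$.

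\textbf{Reduction to limiting measures.} Take a sequence of distinct totally positive algebraic integers $\alpha^{(k)}$ with $\operatorname{tr}(\alpha^{(k)})/\deg(\alpha^{(k)})\to\lsss$. By the standard truncation argument, the normalized counting measures of their conjugates have a weak limit $\mu$, a probability measure on $[0,\infty)$. Any bound $\lambda$ on $\int x\,d\mu$ therefore bounds $\lsss$, since the trace integral is lower semicontinuous. Two families of constraints hold for $\mu$.

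\begin{itemize}
\item For each integer polynomial $Q$ not vanishing at the $\alpha^{(k)}$ (true for all but finitely many $k$), $|\operatorname{Res}(Q,P_k)|\ge 1$. In the limit this gives $\int \log|Q|\,d\mu\ge 0$.
\item By Smith~\cite{Smi}, as used in~\cite{OTS}, the logarithmic energy satisfies $I(\mu)=\iint\log|x-y|\,d\mu(x)\,d\mu(y)\ge 0$.
\end{itemize}

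\textbf{Dual inequality.} Fix an explicit probability measure $\nu$ on $[0,8]$ with potential $U_\nu(x)=\int\log|x-y|\,d\nu(y)$. Since $I(\mu)\ge0$, one has $I(\mu-\nu)\le 0$ for signed measures of total mass zero. Hence $2\int U_\nu\,d\mu\ge I(\nu)+I(\mu)\ge I(\nu)$. This yields a new linear constraint $\int (2U_\nu - I(\nu))\,d\mu\ge 0$.

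Now choose the eighteen integer polynomials $Q_1,\dots,Q_{18}$ and nonnegative weights $c_j$, $t$, together with $\lambda=1.80220$. It then suffices to verify, for all $x\ge0$,
\[
x-\lambda-\sum_j c_j\log|Q_j(x)| - t\bigl(2U_\nu(x)-I(\nu)\bigr)\ \ge\ 0.
\]
Integrating this against $\mu$ gives $\int x\,d\mu\ge\lambda$.

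The point of this structure is that correctness does not depend on how $\nu$, $Q_j$ and the weights were found, for example by solving a numerical LP or SDP in the style of~\cite{OTS}. Any errors there only force a smaller admissible $\lambda$.

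\textbf{Verification.} The remaining work is certifying the displayed inequality on $[0,\infty)$.

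\begin{itemize}
\item \emph{Large $x$.} For $x$ beyond some $X_0$, the linear term dominates the logarithms. This can be shown by hand, using the growth $\log|Q_j|\le \deg Q_j\log x+O(1)$ and $U_\nu(x)\le\log(x)$ for $x\ge 16$, say.
\item \emph{Bounded range.} On $[0,X_0]$, use interval bisection with rigorous interval arithmetic. To make this feasible, $\nu$ should be chosen with a closed-form potential, for instance piecewise-constant densities, for which $U_\nu$ is an explicit combination of $(x-a)\log|x-a|$ terms.
\end{itemize}

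\textbf{Main obstacle.} The hardest step is the logarithmic singularities at roots of the $Q_j$ lying in $[0,\infty)$. There $-c_j\log|Q_j|\to+\infty$, which helps, but naive interval enclosures blow up and bisection will not terminate. I would isolate each real root in a small interval and use monotonicity of $-\log|Q_j|$ near it, together with Lipschitz bounds on the other terms, to conclude positivity without subdividing into the singularity. Any touching points where the inequality is nearly tight, which are typical at optimality, must also be handled. I would treat them by shrinking $\lambda$ slightly below the numerical optimum, so that a positive margin remains everywhere and bisection closes.
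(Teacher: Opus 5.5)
\textbf{The framework matches the paper's; the certificate is missing.} Your weak-duality step is Proposition~\ref{prop:cert}, and your reduction to a limit measure is what the paper takes from \cite[Corollary 1.2]{OTS}. Your derivation of $\int(2U_\nu-I(\nu))\,d\mu\ge I(\mu)\ge 0$ by expanding $I(\mu-\nu)\le 0$ is a correct self-contained proof of the lemma the paper cites from \cite{OTS}. One small correction: $I(\mu-\nu)\le 0$ holds for every mass-zero signed measure of finite energy, so it does not follow from $I(\mu)\ge0$; that fact is only used in the next line.

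The genuine gap is that you never exhibit the certificate, and for this statement the certificate is the proof. Nothing in your argument depends on the number $1.80220$. The same text with $\lambda=1.83$ reads identically, yet that bound is false since $\lsss\le 1.8216$. What establishes $1.80220$ is a specific $\nu$, specific $Q_j$ and multipliers for which the inequality has actually been checked. The paper gives a $6932$-cell piecewise-constant $\eta$ with $I(\eta)=3.13\cdot10^{-5}$, $\lambda_0=0.5347$, and fourteen polynomials with positive weights, three of them degree-$10$ polynomials read from McKee--Smyth rather than found by search. Producing such data is not routine. The paper reports that an unadapted grid loses about $5.5\cdot10^{-4}$, more than three times the margin over the previous $1.80203$. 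Cells had to be cut at the roots of the active polynomials and at the support edges, and then refined iteratively.

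\textbf{The verification obstacle is misidentified.} The zeros of the $Q_j$ are harmless. The term enters as $-c_j\log|Q_j|$, so on each interval you only need an upper bound for $|Q_j|$. That bound stays finite even when the interval contains a root; the paper uses $|Q(x)|\le c\prod_i(|t-\rho_i|+H)+E_Q$. Enclosures therefore never blow up in the relevant direction, and no root isolation is needed.

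The delicate term is $U_\nu$:
\begin{itemize}
\item For a piecewise-constant density, $U_\nu'$ diverges logarithmically at every jump, so no Lipschitz bound is available there.
\item The dual function has cusps at those edges. Its minimum, $1.8022212$ near $x=0.2527$, sits in one of them, leaving only $2\cdot10^{-5}$ of margin.
\item Naive termwise interval evaluation of the closed form $(x-a)\log|x-a|-(x-b)\log|x-b|$ gives $0\cdot\infty$ at cell edges. It also loses heavily to cancellation inside small cells.
\end{itemize}

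The paper's remedy uses the normalized potential $u_j$ of a single cell. It satisfies $u_j'(x)=\frac{1}{b-a}\log\frac{|x-a|}{|x-b|}$, so it decreases to the midpoint and then increases. Hence $\max_{[t-H,t+H]}u_j=\max\bigl(u_j(t-H),u_j(t+H)\bigr)$. This yields an upper bound for $U_\nu$ from two point evaluations per cell, with no loss beyond that maximum.

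Your tail argument, and your choice of $\lambda$ slightly below the numerical optimum, match the paper.
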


The framework is entirely that of~\cite{OTS}: what is new here is the
certificate, that is, the measure and the polynomials exhibited in
Section~\ref{sec:data}, and the rigorous verification of
Section~\ref{sec:verif}. As \cite{OTS} observe, the difficulty of their method
is not the inequality but the computation of the optimal distribution; the
present certificate uses eighteen auxiliary polynomials where they use seven,
on a measure computed to a finer resolution.

\section{The certificate}\label{sec:cert}

For a Borel probability measure $\eta$ on $\mathbb{R}$ with finite logarithmic
energy write
\[
  U_\eta(x)=\int\log|x-y|\,d\eta(y),\qquad
  I(\eta)=\iint\log|x-y|\,d\eta(x)\,d\eta(y).
\]

\begin{proposition}\label{prop:cert}
Let $A\subset\mathbb{Z}[x]$ be finite, let $\lambda_Q\ge 0$ for $Q\in A$, let
$\lambda_0\ge 0$, and let $\eta$ be a Borel probability measure with finite
logarithmic energy. If
\begin{equation}\label{eq:dual}
  x \;\ge\; \lambda+\sum_{Q\in A}\lambda_Q\log|Q(x)|
            +\lambda_0\bigl(2U_\eta(x)-I(\eta)\bigr)
  \qquad\text{for all } x\ge 0,
\end{equation}
then $\lambda\le\lsss$.
\end{proposition}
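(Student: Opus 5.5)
Take a sequence of distinct totally positive algebraic integers $\alpha_k$ with $\operatorname{tr}(\alpha_k)/\deg(\alpha_k)\to\lsss$, and aim to show $\lsss\ge\lambda$. Since $\lsss\le 1.8216$, the normalized root-counting measures $\mu_k=\frac1{n_k}\sum_i\delta_{\alpha_{k,i}}$ have bounded first moment on $[0,\infty)$, so they are tight. Passing to a subsequence, $\mu_k\to\mu$ weakly, with $\mu$ a probability measure on $[0,\infty)$. Lower semicontinuity then gives $\int x\,d\mu\le\liminf\int x\,d\mu_k=\lsss$. Since only finitely many $\alpha$ have bounded degree, $n_k\to\infty$.

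Next, record the arithmetic constraints that $\mu$ inherits.

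\textbf{Resultant constraint.} For each $Q\in\mathbb{Z}[x]$, eventually $\alpha_k$ is not a root of $Q$. Then $\prod_i Q(\alpha_{k,i})$ is a nonzero integer, so $\int\log|Q|\,d\mu_k\ge 0$.

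\textbf{Discriminant constraint.} The discriminant of $\alpha_k$ is a nonzero integer, giving $\frac{1}{n_k^2}\sum_{i\ne j}\log|\alpha_{k,i}-\alpha_{k,j}|\ge0$. In the limit this yields $I(\mu)\ge0$; this is the input from Smith's work~\cite{Smi} used in~\cite{OTS}.

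Passing these to the limit is the main obstacle, because $\log|Q|$ and $\log|x-y|$ are unbounded below near zeros. They are only upper semicontinuous, so the inequalities go the wrong way under weak convergence. I would invoke the limiting statement of~\cite{Smi,OTS} as known: any such limit $\mu$ satisfies $\int\log|Q|\,d\mu\ge0$ for all $Q\in\mathbb{Z}[x]$ and $I(\mu)\ge0$. The tails at infinity are controlled by the first moment, since $\log|Q(x)|=O(\log x)$. This also implies $\mu$ has finite energy. If I had to argue directly, I would truncate $\log$ by $\max(\log,-M)$, apply equidistribution, and let $M\to\infty$, using that all conjugates lie in $[0,\infty)$.

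Finally, integrate the dual inequality \eqref{eq:dual} against $\mu$:
\[
\lsss\ge\int x\,d\mu\ge\lambda+\sum_Q\lambda_Q\int\log|Q|\,d\mu+\lambda_0\Bigl(2\int U_\eta\,d\mu-I(\eta)\Bigr).
\]
The sum is nonnegative because $\lambda_Q\ge0$.

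For the last term, use positive definiteness of $-\log|x-y|$ on signed measures of total mass zero with finite energy: $I(\mu-\eta)\le0$, which expands to $I(\mu)-2\iint\log|x-y|\,d\mu\,d\eta+I(\eta)\le0$. Hence $2\int U_\eta\,d\mu-I(\eta)\ge I(\mu)\ge0$. Since $\lambda_0\ge0$, the whole bracket is nonnegative, and so $\lambda\le\lsss$.

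Some care is needed for mixed integrability. Because $\mu$ and $\eta$ both have finite energy, $\int U_\eta\,d\mu$ is finite. Positive definiteness for noncompact supports holds once the first moments are finite; alternatively, one can restrict attention to $\eta$ supported on $[0,8]$ as used in the paper.
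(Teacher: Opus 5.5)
Your proof is correct and has the same skeleton as the paper's: integrate \eqref{eq:dual} against an admissible limiting measure $\mu$, then drop the non-negative resultant and energy terms. The paper cites \cite[Corollary 1.2]{OTS} for $\lambda_A\le\lsss$ and \cite[Lemma 2.4]{OTS} for the energy term. You rebuild both instead: the first from a minimizing sequence via resultants and discriminants, the second from $I(\mu-\eta)\le 0$. Your version is self-contained, gives the sharper bound $2\int U_\eta\,d\mu-I(\eta)\ge I(\mu)$, and makes plain why $\eta$ may be arbitrary. The price is the bookkeeping that the citations absorb: tightness, uniform integrability, and positive definiteness for non-compact supports.

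One remark of yours is backwards, though it does no damage. Upper semicontinuity is precisely what lets the constraints survive the limit. With the upper tail controlled by the first moment, $\int f\,d\mu\ge\limsup_k\int f\,d\mu_k$ for $f=\log|Q|$, so $\int\log|Q|\,d\mu\ge0$ is immediate. Your truncation sketch also settles the energy directly. For $k_M=\max(\log|x-y|,-M)$ the discriminant gives $\iint k_M\,d\mu_k\,d\mu_k\ge -M/n_k$, the diagonal being the only loss. Hence $\iint k_M\,d\mu\,d\mu\ge 0$, and $I(\mu)\ge0$ follows by monotone convergence. So this step is not an obstacle and needs no black box from \cite{Smi}, whose substance lies in the converse (realizability) direction.

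Two small fixes:
\begin{itemize}
\item Only finitely many totally positive $\alpha$ have bounded degree \emph{and} bounded trace; bounded degree alone does not suffice.
\item Restricting $\eta$ to $[0,8]$ does not help with positive definiteness, since it is the support of $\mu$ that may be unbounded. The finite log-moment version you state first is the one needed.
\end{itemize}
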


\begin{proof}
Let $\mu$ be any probability measure supported on $\mathbb{R}^{+}$ with
$I(\mu)\ge 0$ and $\int\log|Q|\,d\mu\ge 0$ for every $Q\in A$. Integrating
\eqref{eq:dual} against $\mu$ gives
\[
  \int x\,d\mu \;\ge\; \lambda
  +\sum_{Q\in A}\lambda_Q\int\log|Q|\,d\mu
  +\lambda_0\int\bigl(2U_\eta-I(\eta)\bigr)d\mu .
\]
The middle term is non-negative by assumption on $\mu$. The last one is
non-negative by \cite[Lemma 2.4]{OTS}, which states that $I(\mu)\ge 0$ implies
$\int U_\eta\,d\mu-\tfrac12 I(\eta)\ge 0$ for \emph{every} probability measure
$\eta$ of finite energy --- not merely for the optimal one. Hence
$\lambda\le\lambda_A$, where $\lambda_A$ is the infimum of $\int x\,d\mu$ over
such $\mu$, and $\lambda_A\le\lsss$ by \cite[Corollary 1.2]{OTS}.
\end{proof}

Two features of Proposition~\ref{prop:cert} matter in practice. First, nothing
is assumed about $\eta$ beyond finite energy: any data whatsoever yields a valid
bound, so a transcription error can only lower $\lambda$. Second, the energy
constraint on limiting measures, which is what makes the bound better than
Smyth's, holds for every limit of distributions of conjugates without any
regularity assumption, by \cite[Proposition 5.7]{Smi} (stated there with the
opposite sign convention for $I$).

\section{The data}\label{sec:data}

The measure $\eta$ is supported on $[0,8]$ and is constant on each cell of a
partition of $[0,8]$ into $6932$ intervals, of which $3891$ carry positive mass;
the $6933$ endpoints and the $6932$ masses are listed in the ancillary file
\texttt{CERTIFICAT\_C86.json}. Its energy is
\[
  I(\eta)=3.1305148803\cdot 10^{-5}\;>\;0 .
\]
We take $\lambda_0=0.5346669850888335$ and the following fourteen polynomials
with positive multipliers (four further polynomials of the file carry
$\lambda_Q=0$ and play no role).

\begin{center}\small
\begin{tabular}{lr}
\toprule
$Q$ & $\lambda_Q$\\
\midrule
$x$ & 0.5034811423 \\
$x - 1$ & 0.3670710509 \\
$x^{2} - 3x + 1$ & 0.1528020051 \\
$x^{4} - 6x^{3} + 11x^{2} - 7x + 1$ & 0.0581527881 \\
$x^{6} - 11x^{5} + 46x^{4} - 92x^{3} + 89x^{2} - 36x + 4$ & 0.0176280133 \\
$x^{4} - 7x^{3} + 13x^{2} - 7x + 1$ & 0.0172791038 \\
$x^{6} - 9x^{5} + 32x^{4} - 57x^{3} + 53x^{2} - 24x + 4$ & 0.0035768493 \\
$x^{6} - 10x^{5} + 36x^{4} - 58x^{3} + 43x^{2} - 13x + 1$ & 0.0025879009 \\
$x^{6} - 10x^{5} + 37x^{4} - 63x^{3} + 50x^{2} - 16x + 1$ & 0.0023120587 \\
$x^{6} - 10x^{5} + 35x^{4} - 55x^{3} + 40x^{2} - 12x + 1$ & 0.0006650534 \\
$x^{10} - 18x^{9} + 134x^{8} - 538x^{7} + 1273x^{6} - 1822x^{5} + 1560x^{4} - 766x^{3} + 200x^{2} - 24x + 1$ & 0.0004548177 \\
$x^{10} - 18x^{9} + 135x^{8} - 549x^{7} + 1320x^{6} - 1920x^{5} + 1662x^{4} - 813x^{3} + 206x^{2} - 24x + 1$ & 0.0004304207 \\
$x^{5} - 9x^{4} + 27x^{3} - 32x^{2} + 13x - 1$ & 0.0004167594 \\
$x^{10} - 18x^{9} + 134x^{8} - 537x^{7} + 1265x^{6} - 1798x^{5} + 1526x^{4} - 743x^{3} + 194x^{2} - 24x + 1$ & 0.0004151488 \\
\bottomrule
\end{tabular}
\end{center}

All fourteen have their roots real and in $[0,5]$; the largest is
$4.938968202\ldots$. The three of degree $10$ are the totally positive integers
of degree $10$ and trace $18$ found by McKee and Smyth~\cite{MS}. The charged
cells lie in $[0.048894,\,5.352052]$ and form $67$ components.

It is worth stressing what $\eta$ is \emph{not}. Three of the eighteen
constraints $\int\log|Q|\,d\eta\ge 0$ fail at $\eta$, by $-6.5\cdot10^{-5}$,
$-5.7\cdot10^{-5}$ and $-2.5\cdot10^{-5}$: the measure is not feasible for the
primal problem, and it is not the optimal measure $\mu_A$ of \cite[Theorem
1.1]{OTS}. Proposition~\ref{prop:cert} does not care. The measure enters only
through $2U_\eta-I(\eta)$, whose integral against any admissible $\mu$ is
non-negative whatever $\eta$ is; the residuals above measure the convergence of
the solver that produced $\eta$, and nothing else.

\section{Verification}\label{sec:verif}

Write $g(x)$ for the difference between the two sides of \eqref{eq:dual}, so
that Theorem~\ref{thm:main} amounts to $\inf_{x\ge 0}g(x)\ge 1.80220$.

On an interval $[t-H,t+H]$ we bound each term from the correct side. For the
linear term, $x\ge t-H$. For a polynomial, $|Q(x)|\le
c\prod_i(|t-\rho_i|+H)+E_Q$, where $c$ is the leading coefficient, the $\rho_i$
are the computed roots, and $E_Q$ bounds, coefficient by coefficient, the
difference between $c\prod_i(z-\rho_i)$ and the integer polynomial $Q$; the
largest of the fourteen values of $E_Q$ is $1.3\cdot 10^{-51}$, so this step is
exact for all practical purposes and requires no perturbation theory for roots.
For the potential, the contribution $u_j$ of one cell is unimodal, with its
minimum at the midpoint of the cell, so its maximum over an interval is attained
at an endpoint; hence
\[
  U_\eta(x)\;\le\;\sum_j m_j\max\bigl(u_j(t-H),u_j(t+H)\bigr)
  \qquad\text{for } x\in[t-H,t+H],
\]
with no loss beyond the maximum itself. The energy $I(\eta)$ is computed in
double and in extended precision, which agree to $2\cdot10^{-15}$, and is then
decreased by $10^{-12}$; a further $10^{-9}$ is subtracted from every bound.

Bisecting $[0,14]$ with these bounds certifies $g\ge 1.80220$ there after $24$
levels and $1\,699\,227$ intervals. Beyond $14$ the bound is analytic and needs
nothing from the bisection: $|Q(x)|\le(x+\rho_{\max})^{\deg Q}$ for $x\ge0$ with
$\rho_{\max}=4.938969$, and $U_\eta(x)\le\log x$ for $x\ge8$ because $\eta$ lives
on $[0,8]$, so with $\sum_Q\lambda_Q\deg Q=1.6535906937$ and
$2\lambda_0=1.0693339702$,
\[
  g(x)\;\ge\;h(x):=x-1.6535906937\log(x+\rho_{\max})-1.0693339702\log x
  +\lambda_0 I(\eta).
\]
Now $h'(x)=1-1.6535906937/(x+\rho_{\max})-1.0693339702/x$ is positive at $x=14$
and increasing, and $h(14)=6.3144\ldots$, so $g\ge 6.31$ on $[14,\infty)$. This
proves Theorem~\ref{thm:main}. \hfill$\square$

The bound has been certified twice, by two provers written six weeks apart from
different formulations of the monotone bounds, one starting from a uniform grid
of $4096$ intervals and the other from the single interval $[0,14]$. They agree
not merely on the outcome: from width $3.4\cdot10^{-3}$ downwards their level
counts coincide exactly --- $18\,504$, $36\,482$, $72\,162$, $143\,178$,
$284\,664$, $489\,354$, $427\,770$, $199\,340$, $8\,202$ --- so the two take
the same decision on each of the $1.7$ million intervals, and their totals
differ only by the initial grid. The energy $I(\eta)$ was likewise computed twice,
once as a double sum over the $6932^2$ pairs of cells and once as a collapsed sum
over $3958$ edge weights, agreeing to $6.5\cdot10^{-18}$.

The ancillary file \texttt{verify\_C86.py} (Python with \texttt{numpy} and
\texttt{mpmath}, no arguments, about three minutes) performs all of the above
from the JSON file alone: the structural checks on $\eta$, both computations of
$I(\eta)$, the roots and their residuals, the bisection and the tail, and exits
with an error if any check fails.

\section{Remarks}

\begin{remark}
$\lsss$ is defined by a $\liminf$. Theorem~\ref{thm:main} must not be restated
as an assertion about all but finitely many totally positive algebraic integers
of trace below $1.80220\deg$: that statement is stronger and is not proved here.
\end{remark}

\begin{remark}
The numerical infimum of $g$ is $1.8022211895\ldots$, attained near
$x=0.2527$, where $g$ has a logarithmic cusp; the certified value $1.80220$ is
$2.1\cdot10^{-5}$ below it. Closing that gap is a matter of bisection depth, not
of new data: each further digit multiplies the number of intervals by about
five.
\end{remark}

\begin{remark}
The certificate was obtained by solving the problem of \cite{OTS} as a linear
program in the multipliers, with cutting planes in that space --- cutting planes
in the space of measures are useless --- and an exact inner solver: at fixed
support the first-order conditions are a linear system, negative masses are
dropped and violated indices added. Everything is in closed form; with a
density constant on cells, no quadrature occurs anywhere.

The grid matters more than its size. Since $U_\eta'$ diverges logarithmically at
every cell edge where the density jumps, the dual function has a cusp there and
its minima lodge in those cusps. Partitioning at the roots of the active
polynomials and at the edges of the support, with cosine spacing inside each
piece, takes the discrete-to-continuous loss from $5.5\cdot10^{-4}$ to
$9.7\cdot10^{-5}$ at equal cell count; iterating the construction --- extracting
the edges at fine resolution and rebuilding --- gives most of the rest. The
measured loss is about $1.6\cdot10^{-6}$ per component of the support, a cost
per edge rather than a matter of resolution. Successive grids of $3248$, $4450$,
$5585$ and $6932$ cells give $1.8021781$, $1.8022116$, $1.8022182$ and
$1.8022212$, against a discrete ceiling of $1.8022246$ for this set of
constraints.
\end{remark}

\begin{remark}
The three degree-$10$ polynomials were not found by any automatic search; they
were read in \cite{MS}, and they contribute the last $1.3\cdot10^{-5}$. The
reason is measurable. For these polynomials $\int Q^2 d\mu$ is of order
$10^{7}$, so Jensen's bound $\int\log|Q|\,d\mu\le\frac12\log\int Q^2 d\mu$
returns $+7.96$ where the true value is $-0.02$; they are minute on part of the
support and enormous elsewhere, which the logarithm absorbs and the quadratic
norm amplifies. In the weighted oracle of Wu and Flammang, rebuilt with its full
weight, the relevant vector ranks $390\,624$ out of $390\,625$ by norm: nearly
the longest, not the shortest. Lattice reduction on moments, exhaustive
enumeration by degree and perturbation of products share that one blind spot. An
oracle certifies what its criterion measures, not the absence of a violated
constraint.
\end{remark}

\begin{remark}
Two checks external to the proof are worth recording. The machinery that
produced the certificate reproduces, from below and within $5\cdot10^{-6}$, the
published values of $\lambda_A$ for $A=\{x\}$, $\{x,x-1\}$ and the sets of
Corollaries 1.6 and 1.7 of \cite{OTS}. And a bound of this size can be refuted
outright: by \cite[\S5]{MS}, a monic integer polynomial $f$ with distinct
positive real roots and $\min(|f(0)|,|f(\gamma_1)|,\dots)\ge 2$ over the roots
$\gamma_i$ of $f'$ forces $\lsss\le\operatorname{tr}(f)/\deg f$. Searching
$1681$ candidates --- all degrees up to $6$ with roots in $(0,5.6)$ and absolute
trace at most $2.02$, the family of $4\cos^2(\pi/n)$, and the lists of
\cite{Fla,McK,MS} --- produced none with absolute trace below $1.8022$.
\end{remark}

\begin{remark}
The gap to the upper bound $1.8216$ of \cite{OT} remains wide, and nothing here
bears on the conjecture that $\lsss=2$, which \cite{Smi} disproves.
\end{remark}

\subsection*{Acknowledgements}

This work was carried out with the assistance of an AI system (Claude,
Anthropic); every claim in it has been checked by independent computation, and
the verification program distributed with this note was written from the
certificate data alone.


\begin{thebibliography}{99}
\bibitem{Fla} V. Flammang, \emph{Trace of totally positive algebraic integers
  and integer transfinite diameter}, Math. Comp. \textbf{78} (2009), no. 266,
  1119--1125.
\bibitem{McK} J. McKee, \emph{Computing totally positive algebraic integers of
  small trace}, Math. Comp. \textbf{80} (2011), no. 274, 1041--1052.
\bibitem{MS} J. McKee and C. Smyth, \emph{Salem numbers of trace $-2$ and traces
  of totally positive algebraic integers}, Algorithmic Number Theory, Lecture
  Notes in Comput. Sci. \textbf{3076}, Springer, 2004, 327--337.
\bibitem{OT} B. J. Orloski and N. Talebizadeh Sardari, \emph{A quantitative
  converse of Fekete's theorem}, preprint arXiv:2304.10021 [math.NT] (2024).
\bibitem{OTS} B. J. Orloski, N. Talebizadeh Sardari and A. Smith, \emph{New
  lower bounds for the Schur--Siegel--Smyth trace problem}, Math. Comp.
  \textbf{94} (2025), no. 354; arXiv:2401.03252 [math.NT].
\bibitem{Sch} I. Schur, \emph{\"Uber die Verteilung der Wurzeln bei gewissen
  algebraischen Gleichungen mit ganzzahligen Koeffizienten}, Math. Z. \textbf{1}
  (1918), no. 4, 377--402.
\bibitem{Sie} C. L. Siegel, \emph{The trace of totally positive and real
  algebraic integers}, Ann. of Math. (2) \textbf{46} (1945), 302--312.
\bibitem{Smi} A. Smith, \emph{Algebraic integers with conjugates in a prescribed
  distribution}, preprint arXiv:2111.12660 [math.NT] (2021; v2, 2024).
\bibitem{Smy} C. J. Smyth, \emph{The mean values of totally real algebraic
  integers}, Math. Comp. \textbf{42} (1984), no. 166, 663--681.
\bibitem{WWW} C. Wang, J. Wu and Q. Wu, \emph{Totally positive algebraic
  integers with small trace}, Math. Comp. \textbf{90} (2021), no. 331,
  2317--2332.
\end{thebibliography}
\end{document}